\newtheorem{theorem}{Theorem}[section]
\newdefinition{defn}[thm]{Definition}
\newdefinition{rmk}[thm]{Remark}
\newdefinition{ex}[thm]{Example}
\newproof{pf}{Proof}
\journal{Approximation and Computation in Science and Engineering , edited by N.J. Daras and Th.M. Rassias, Springer}
\begin{document}

\begin{frontmatter}

\title{On Hyers-Ulam-Rassias  stability of a Volterra-Hammerstein functional  integral equation}



\author{Sorina Anamaria Ciplea, Nicolaie Lungu, Daniela Marian, Themistocles M. Rassias}

\address{Sorina Anamaria Ciplea, Technical University of Cluj-Napoca, Department of Management and Technology, 28 Memorandumului Street, 400114, Cluj-Napoca, Romania\\
 {sorina.ciplea@ccm.utcluj.ro}}

\address{Nicolaie Lungu, Technical University of Cluj-Napoca, Department of Mathematics, 28 Memorandumului Street, 400114, Cluj-Napoca, Romania\\
{nlungu@math.utcluj.ro}}

\address{Daniela Marian, Technical University of Cluj-Napoca, Department of Mathematics, 28 Memorandumului Street, 400114, Cluj-Napoca, Romania\\ {daniela.marian@math.utcluj.ro}}

\address{Themistocles M. Rassias, National Technical University of Athens, Department of Mathematics, Zografou Campus, 15780 Athens, Greece\\ {trassias@math.ntua.gr}}

\begin{abstract}
The aim of  this paper is to study Hyers-Ulam-Rassias stability for a Volterra-Hammerstein functional  integral equation in three variables via Picard operators.
\end{abstract}

\begin{keyword}
 Volterra-Hammerstein functional  integral equation; Hyers-Ulam-Rassias  stability

\noindent {\bf 2010 Mathematics Subject Classification}: 26D10; 34A40; 39B82; 35B20

\end{keyword}

\end{frontmatter}

\section{1. Introduction}
Ulam stability is an important concept in the theory of functional equations. The origin of Ulam stability theory was an open problem formulated by Ulam, in 1940, concerning the stability of homomorphism \cite{Ulam}.
The first partial answer to Ulam's question came within a year, when Hyers \cite{Hyers} proved a stability result, for additive Cauchy equation in Banach spaces.
The first result on  Hyers-Ulam stability of differential equations was given by Obloza  \cite{Obloza}. Alsina and Ger investigated the stability of  differential equations $y'=y$ \cite{AlsinaGer}.
The result of Alsina and Ger were extended by many authors  \cite{CimpeanPopa}, \cite{Jung1}, \cite{JungRassias1}, \cite{JungRassias2}, \cite{JungRassias3}, \cite{PopaPugna}, \cite{PopaRasa1}, \cite{PopaRasa2}, \cite{RezRassias}, \cite{Rus2}, \cite{Takahasi}  to the stability of the first order linear differential equation and linear differential equations of higher order. For a broader study of Hyers-Ulam stability for functional equations the reader is also referred to the following books and papers: \cite{Abdollahpour}, \cite{Brzdek}, \cite{Jung}, \cite{Jung2}, \cite{Jung3}, \cite{Jung4}, \cite{Kannappan},  \cite{Lee}, \cite{Lee2}, \cite{Milovanovic}, \cite{Mortici}, \cite{Park}, \cite{Ulam}.

The first result proved on the Hyers-Ulam stability of partial differential equations
is due to A. Prastaro and Th.M. Rassias \cite{PraRassias}. Some results regarding Ulam-Hyers stability of partial differential equations were given by S.-M. Jung \cite{Jung2},
S.-M. Jung and K.-S. Lee \cite{JungLee}, N. Lungu and S.A Ciplea \cite{LunguCiplea}, N. Lungu and D. Popa \cite{LunguPopa1}, \cite{LunguPopa2}, \cite{LunguPopa3}, N. Lungu,  C. Craciun \cite{LC}, N. Lungu and D. Marian \cite{LunguMar}, D Marian, S.A. Ciplea and N. Lungu \cite{MarianCipleaLungu}, I.A. Rus and N. Lungu \cite{RL}.
 In \cite{BrzdekPopaRasaXu} Brzdek, Popa, Rasa and Xu presented a unified and systematic approach to the field.
 Some recent results regarding stability analysis and their applications were established by H. Khan, A. Khan, T. Abdeljawad and  A. Alkhazzan \cite{Khan1}, A. Khan,  J.F. Gómez-Aguilar, T.S. Khan and H. Khan \cite{Khan2}, H. Khan,  T. Abdeljawad,  M. Aslam, R.A. Khan and A. Khan  \cite{Khan3}, H. Khan, J.F. Gómez-Aguilar, A. Khan and T.S. Khan \cite{Khan4}. Results regarding fixed point theory and the Ulam stability can be found in \cite{BrzCadCipl}.

In this paper we consider the following Volterra-Hammerstein functional
integral equation in three variables%
\begin{align}\label{eq1}
&u\left( x,y,z\right) =g\left( x,y,z,h\left( u\right) \left( x,y,z\right)\right) \\
&+\int_{0}^{x}\!\!\! \int_{0}^{y}\!\!\! \int_{0}^{z}K\left( x,y,z,r,s,t,f_{1}\left(
u\right) \left( r,s,t\right) \right) drdsdt+ \int_{0}^{\infty
}\!\!\! \int_{0}^{\infty }\!\!\! \int_{0}^{\infty }F\left( x,y,z,r,s,t,f_{2}\left(
u\right) \left( r,s,t\right) \right) drdsdt \nonumber
\end{align}
via Picard operators.

The present paper is motivated by a recent paper \cite{NgocThLo} of L.T.P. Ngoc, T.M.
Thuyet and N.T. Long in which is studied the existence of asymptotically
stable solution for a Volterra-Hammerstein integral equation in three
variables.
Equation \eqref{eq1} is a generalization of equation (1.1) from \cite{NgocThLo}.

\section{2. Existence and uniqueness}

In what follows we consider some conditions relative to equation \eqref{eq1}.

Let $\left( E,\left\vert \cdot \right\vert \right) $ be a Banach space and $%
\Delta =\left\{ \left( x,y,z,r,s,t\right) \in \mathbb{R}_{+}^{6}:s\leq x,r\leq
y,t\leq z\right\} $. Let $\tau >0$ and the set%
\begin{equation*}
X_{\tau }:=\left\{ u\in C\left( \mathbb{R}_{+}^{3},E\right) \mid \exists M\left(
u\right) >0:\left\vert u\left( x,y,z\right)\right\vert  e^{-\tau \left( x+y+z\right)
}\leq M\left( u\right), \forall \left( x,y,z\right) \in \mathbb{R}_{+} \right\} .
\end{equation*}%
On $X_{\tau }$ we consider Bielecki's norm%
\begin{equation*}
\left\Vert u\right\Vert _{\tau }:=\underset{x,y,z\in R_{+}}{\sup }\left(
\left\vert u\left( x,y,z\right)\right\vert  e^{-\tau \left( x+y+z\right) }\right) .
\end{equation*}%
It is clear that $\left( X_{\tau },\left\Vert \cdot \right\Vert _{\tau
}\right) $ is a Banch space. In what follows we assume, relative to \eqref{eq1},
the conditions:
\begin{itemize}
\item[(C1)]
 $g\in C\left( \mathbb{R}_{+}^{3}\times E,E\right) ,K\in C\left(
\Delta \times E,E\right) ,F\in C\left( \Delta\times E,E\right) ,h\in
C\left( X_{\tau },X_{\tau }\right) ,f_{1}\in C\left( X_{\tau },X_{\tau
}\right) ,f_{2}\in C\left( X_{\tau },X_{\tau }\right) ;$

\item[(C2)]  there exists $l_{h}>0$ such that%
\begin{equation*}
\left\vert h\left( u\right) \left( x,y,z\right) -h\left( v\right) \left(
x,y,z\right) \right\vert \leq l_{h}\left\Vert u-v\right\Vert_{\tau } e^{\tau \left(
x+y+z\right) },\forall x,y,z\in \mathbb{R}_{+},\forall u,v\in X_{\tau };
\end{equation*}

\item[(C3)] there exists $l_{g}>0$ such that%
\begin{equation*}
\left\vert g\left( x,y,z,e_{1}\right) -g\left( x,y,z,e_{2}\right)
\right\vert \leq l_{g}\left\vert e_{1}-e_{2}\right\vert ,\forall x,y,z\in
\mathbb{R}_{+},\forall e_{1},e_{2}\in E;
\end{equation*}

\item[(C4)] there exists $l_{K}\in C\left( \Delta , \mathbb{R}_{+}\right) $
such that%
\begin{equation*}
\left\vert K\left( x,y,z,r,s,t,e_{3}\right) -K\left(
x,y,z,r,s,t,e_{4}\right) \right\vert \leq l_{K}\left( x,y,z,r,s,t\right)
\left\vert e_{3}-e_{4}\right\vert ,
\end{equation*}
$\forall \left( x,y,z,r,s,t\right) \in
\Delta ,\forall e_{3},e_{4}\in E;$
\item[(C5)] there exists $l_{F}\in C\left( \Delta, \mathbb{R}_{+}\right) $
such that%
\begin{equation*}
\left\vert F\left( x,y,z,r,s,t,e_{5}\right) -F\left(
x,y,z,r,s,t,e_{6}\right) \right\vert \leq l_{F}\left( x,y,z,r,s,t\right)
\left\vert e_{5}-e_{6}\right\vert ,
\end{equation*}
$\forall \left( x,y,z,r,s,t\right) \in
\Delta ,\forall e_{5},e_{6}\in E;$
\item[(C6)] there exists $l_{f_{1}}>0$ and $l_{f_{2}}>0$ such that%
\begin{equation*}
\left\vert f_{1}\left( u\right) \left( r,s,t\right) -f_{1}\left( v\right)
\left( r,s,t\right) \right\vert \leq l_{f_{1}}\left\vert u\left(
r,s,t\right) -v\left( r,s,t\right) \right\vert ,\forall \left( r,s,t\right)
\in \mathbb{R}_{+}^{3},\forall u,v\in X_{\tau },
\end{equation*}%
\begin{equation*}
\left\vert f_{2}\left( u\right) \left( r,s,t\right) -f_{2}\left( v\right)
\left( r,s,t\right) \right\vert \leq l_{f_{2}}\left\vert u\left(
r,s,t\right) -v\left( r,s,t\right) \right\vert ,\forall \left( r,s,t\right)
\in \mathbb{R}_{+}^{3},\forall u,v\in X_{\tau };
\end{equation*}

\item[(C7)] there exists $l_{1}>0$ and $l_{2}>0$ such that%
\begin{equation*}
\int_{0}^{x}\!\!\!\int_{0}^{y}\!\!\!\int_{0}^{z}l_{f_{1}}l_{K}\left( x,y,z,r,s,t\right)
e^{\tau \left( r+s+t\right) }drdsdt\leq l_{1}e^{\tau \left( x+y+z\right)
},\forall \left( x,y,z,r,s,t\right) \in \Delta ,
\end{equation*}%
\begin{equation*}
\int_{0}^{\infty }\!\!\!\int_{0}^{\infty }\!\!\!\int_{0}^{\infty }l_{f_{2}}l_{F}\left(
x,y,z,r,s,t\right) e^{\tau \left( r+s+t\right) }drdsdt\leq l_{2}e^{\tau
\left( x+y+z\right) },\forall \left( x,y,z,r,s,t\right) \in \Delta ;
\end{equation*}

\item[(C8)] $l_{g}l_{h}+l_{1}+l_{2}<1$;

\item[(C9)] \begin{align*}
& \left\vert g\left( x,y,z,h\left( u\right) \left(
x,y,z\right) \right) \right\vert
+\int_{0}^{x}\!\!\int_{0}^{y}\!\!\int_{0}^{z}\left\vert K\left(
x,y,z,r,s,t,f_{1}\left( 0\right) \left( r,s,t\right) \right) \right\vert
drdsdt\\
&+\int_{0}^{\infty }\!\!\!\int_{0}^{\infty }\!\!\!\int_{0}^{\infty }\left\vert
F\left( x,y,z,r,s,t,f_{2}\left( 0\right) \left( r,s,t\right) \right)
\right\vert drdsdt\leq \alpha \exp \left( \tau \left( x+y+z\right) \right) ,
\end{align*}
$\forall \left( x,y,z,r,s,t\right) \in \Delta $;
\item[(C10)] there exists $m>0$ such that
\begin{equation*}
\int_{0}^{\infty}\!\!\!\int_{0}^{\infty}\!\!\!\int_{0}^{\infty}\left[ l_{f_{1}}l_{K}\left(
x,y,z,r,s,t\right) +l_{f_{2}}l_{F}\left( x,y,z,r,s,t\right) \right]
drdsdt\leq m,\forall \left( x,y,z,r,s,t\right) \in \Delta .
\end{equation*}
\end{itemize}

\begin{theorem}
Under the conditions $\left( C1\right) -\left( C9\right) $ the equation \eqref{eq1}
has in $X_{\tau }$ a unique solution $u^{\ast }.$
\end{theorem}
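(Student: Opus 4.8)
The plan is to rewrite \eqref{eq1} as a fixed point equation for a suitable operator on the Banach space $\left(X_{\tau},\left\Vert\cdot\right\Vert_{\tau}\right)$ and then to invoke the contraction principle (Picard operator theory). Define $A\colon X_{\tau}\to X_{\tau}$ by
\begin{align*}
A\left(u\right)\left(x,y,z\right)&:=g\left(x,y,z,h\left(u\right)\left(x,y,z\right)\right)+\int_{0}^{x}\!\!\!\int_{0}^{y}\!\!\!\int_{0}^{z}K\left(x,y,z,r,s,t,f_{1}\left(u\right)\left(r,s,t\right)\right)drdsdt\\
&\qquad+\int_{0}^{\infty}\!\!\!\int_{0}^{\infty}\!\!\!\int_{0}^{\infty}F\left(x,y,z,r,s,t,f_{2}\left(u\right)\left(r,s,t\right)\right)drdsdt .
\end{align*}
By construction, $u\in X_{\tau}$ solves \eqref{eq1} if and only if $A\left(u\right)=u$, so it suffices to show that $A$ has a unique fixed point in $X_{\tau}$.

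First I would check that $A$ maps $X_{\tau}$ into $X_{\tau}$. Continuity of $A\left(u\right)$ on $\mathbb{R}_{+}^{3}$ follows from $(C1)$ together with the standard continuity theorems for integrals depending on a parameter, the convergence of the improper integral being ensured by $(C5)$--$(C7)$ and $(C9)$. For the exponential bound, fix $u\in X_{\tau}$ and $\left(x,y,z\right)\in\mathbb{R}_{+}^{3}$; inserting $\pm K\left(x,y,z,r,s,t,f_{1}\left(0\right)\left(r,s,t\right)\right)$ and $\pm F\left(x,y,z,r,s,t,f_{2}\left(0\right)\left(r,s,t\right)\right)$ and using $(C4)$, $(C5)$, $(C6)$,
\begin{align*}
\left\vert A\left(u\right)\left(x,y,z\right)\right\vert&\leq\left\vert g\left(x,y,z,h\left(u\right)\left(x,y,z\right)\right)\right\vert+\int_{0}^{x}\!\!\!\int_{0}^{y}\!\!\!\int_{0}^{z}\left\vert K\left(x,y,z,r,s,t,f_{1}\left(0\right)\left(r,s,t\right)\right)\right\vert drdsdt\\
&\quad+\int_{0}^{\infty}\!\!\!\int_{0}^{\infty}\!\!\!\int_{0}^{\infty}\left\vert F\left(x,y,z,r,s,t,f_{2}\left(0\right)\left(r,s,t\right)\right)\right\vert drdsdt\\
&\quad+l_{f_{1}}\int_{0}^{x}\!\!\!\int_{0}^{y}\!\!\!\int_{0}^{z}l_{K}\left(x,y,z,r,s,t\right)\left\vert u\left(r,s,t\right)\right\vert drdsdt\\
&\quad+l_{f_{2}}\int_{0}^{\infty}\!\!\!\int_{0}^{\infty}\!\!\!\int_{0}^{\infty}l_{F}\left(x,y,z,r,s,t\right)\left\vert u\left(r,s,t\right)\right\vert drdsdt .
\end{align*}
The first three terms are $\leq\alpha e^{\tau\left(x+y+z\right)}$ by $(C9)$; for the last two, $\left\vert u\left(r,s,t\right)\right\vert\leq\left\Vert u\right\Vert_{\tau}e^{\tau\left(r+s+t\right)}$ together with $(C7)$ gives the bound $\left(l_{1}+l_{2}\right)\left\Vert u\right\Vert_{\tau}e^{\tau\left(x+y+z\right)}$. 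Hence $\left\vert A\left(u\right)\left(x,y,z\right)\right\vert e^{-\tau\left(x+y+z\right)}\leq\alpha+\left(l_{1}+l_{2}\right)\left\Vert u\right\Vert_{\tau}$ for every $\left(x,y,z\right)$, so $A\left(u\right)\in X_{\tau}$.

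Next I would show that $A$ is a contraction on $X_{\tau}$. For $u,v\in X_{\tau}$, the triangle inequality together with $(C3)$ and $(C2)$ bounds the $g$-part of $\left\vert A\left(u\right)\left(x,y,z\right)-A\left(v\right)\left(x,y,z\right)\right\vert$ by $l_{g}l_{h}\left\Vert u-v\right\Vert_{\tau}e^{\tau\left(x+y+z\right)}$; $(C4)$, $(C6)$, $(C7)$ bound the Volterra part by $l_{1}\left\Vert u-v\right\Vert_{\tau}e^{\tau\left(x+y+z\right)}$; and $(C5)$, $(C6)$, $(C7)$ bound the Hammerstein part by $l_{2}\left\Vert u-v\right\Vert_{\tau}e^{\tau\left(x+y+z\right)}$. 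Multiplying by $e^{-\tau\left(x+y+z\right)}$ and taking the supremum over $\left(x,y,z\right)\in\mathbb{R}_{+}^{3}$ yields
\begin{equation*}
\left\Vert A\left(u\right)-A\left(v\right)\right\Vert_{\tau}\leq\left(l_{g}l_{h}+l_{1}+l_{2}\right)\left\Vert u-v\right\Vert_{\tau} .
\end{equation*}
By $(C8)$, $l_{g}l_{h}+l_{1}+l_{2}<1$, so $A$ is a contraction on the complete space $\left(X_{\tau},\left\Vert\cdot\right\Vert_{\tau}\right)$; in particular $A$ is a Picard operator, and the contraction principle gives a unique fixed point $u^{\ast}\in X_{\tau}$, which is the unique solution of \eqref{eq1}.

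The routine but lengthy part is the contraction estimate, where every Lipschitz hypothesis $(C2)$--$(C7)$ feeds into the Bielecki-norm computation. The genuinely delicate point is the first step — making sure that $A$ does not leave $X_{\tau}$, i.e. that the improper triple integral of $F$ converges, defines a continuous function of $\left(x,y,z\right)$, and stays inside the exponential-growth class. This is exactly what condition $(C9)$ is designed to supply (a growth majorant for the part of $A\left(u\right)$ not controlled by the Lipschitz bounds), so it is the hypothesis that must be handled with the most care.
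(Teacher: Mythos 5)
Your proposal is correct and follows essentially the same route as the paper's own proof: you define the same operator $A$, establish $A(X_{\tau})\subseteq X_{\tau}$ by the same insertion of $\pm K(\cdot,f_{1}(0))$ and $\pm F(\cdot,f_{2}(0))$ together with $(C4)$--$(C7)$ and $(C9)$, and obtain the same contraction constant $l_{g}l_{h}+l_{1}+l_{2}<1$ from $(C2)$--$(C8)$ before applying the contraction principle. The only difference is that you explicitly flag continuity and convergence of the improper integral, which the paper leaves implicit.
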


\begin{proof}
We consider the operator
\begin{equation*}
A:X_{\tau }\rightarrow X_{\tau },A\left( u\right) \left( x,y,z\right) :=%
\text{ second part of \eqref{eq1}}.
\end{equation*}%
First we prove that $A(u)$ maps $X_{\tau }$ in $X_{\tau }.$ For $u\in X_{\tau }$
we have:%
\begin{align*}
&\left\vert A\left( u\right) \left( x,y,z\right) \right\vert  \leq
\left\vert g\left( x,y,z,h\left( u\right) \left( x,y,z\right) \right)
\right\vert +\int_{0}^{x}\!\!\!\int_{0}^{y}\!\!\! \int_{0}^{z}\left\vert K\left(
x,y,z,r,s,t,f_{1}\left( u\right) \left( r,s,t\right) \right) \right\vert
drdsdt \\
&+\int_{0}^{\infty }\!\!\!\int_{0}^{\infty }\!\!\!\int_{0}^{\infty }\left\vert F\left(
x,y,z,r,s,t,f_{2}\left( u\right) \left( r,s,t\right) \right) \right\vert
drdsdt \leq \left\vert g\left( x,y,z,h\left( u\right) \left( x,y,z\right)
\right) \right\vert  \\
&+\int_{0}^{x}\!\!\!\int_{0}^{y}\!\!\!\int_{0}^{z}\left\vert K\left(
x,y,z,r,s,t,f_{1}\left( u\right) \left( r,s,t\right) \right) -K\left(
x,y,z,r,s,t,f_{1}\left( 0\right) \left( r,s,t\right) \right) \right\vert
drdsdt \\
&+\int_{0}^{\infty }\!\!\!\int_{0}^{\infty }\!\!\!\int_{0}^{\infty }\left\vert F\left(
x,y,z,r,s,t,f_{2}\left( u\right) \left( r,s,t\right) \right) -F\left(
x,y,z,r,s,t,f_{2}\left( 0\right) \left( r,s,t\right) \right) \right\vert
drdsdt \\
&+\int_{0}^{x}\!\!\!\int_{0}^{y}\!\!\!\int_{0}^{z}\left\vert K\left(
x,y,z,r,s,t,f_{1}\left( 0\right) \left( r,s,t\right) \right) \right\vert
drdsdt \\
&+\int_{0}^{\infty }\!\!\!\int_{0}^{\infty }\!\!\!\int_{0}^{\infty }\left\vert F\left(
x,y,z,r,s,t,f_{2}\left( 0\right) \left( r,s,t\right) \right) \right\vert
drdsdt .
\end{align*}
We obtain
\begin{align*}
&\left\vert A\left( u\right) \left( x,y,z\right) \right\vert
\leq \left\vert g\left( x,y,z,h\left( u\right) \left( x,y,z\right) \right)
\right\vert +\int_{0}^{x}\!\!\!\int_{0}^{y}\!\!\!\int_{0}^{z}\left\vert K\left(
x,y,z,r,s,t,f_{1}\left( 0\right) \left( r,s,t\right) \right) \right\vert
drdsdt \\
&+\int_{0}^{\infty }\!\!\!\int_{0}^{\infty }\!\!\!\int_{0}^{\infty }\left\vert F\left(
x,y,z,r,s,t,f_{2}\left( 0\right) \left( r,s,t\right) \right) \right\vert
drdsdt \\
&+\int_{0}^{x}\!\!\!\int_{0}^{y}\!\!\!\int_{0}^{z}l_{K}\left( x,y,z,r,s,t\right)
\left\vert f_{1}\left( u\right) \left( r,s,t\right) -f_{1}\left( 0\right) \left( r,s,t\right) \right\vert drdsdt \\
&+\int_{0}^{\infty }\!\!\!\int_{0}^{\infty }\!\!\!\int_{0}^{\infty }l_{F}\left(
x,y,z,r,s,t\right) \left\vert f_{2}\left( u\right) \left( r,s,t\right) -f_{2}\left( 0\right) \left( r,s,t\right)
\right\vert drdsdt \\
&\leq \alpha \exp \left( \tau \left( x+y+z\right) \right)
+\int_{0}^{x}\!\!\!\int_{0}^{y}\!\!\!\int_{0}^{z}l_{K}\left( x,y,z,r,s,t\right)
l_{f_{1}}\left\vert u\left( r,s,t\right) \right\vert e^{\tau \left(
r+s+t\right) }\cdot e^{-\tau \left( r+s+t\right) }drdsdt \\
&+\int_{0}^{\infty }\!\!\!\int_{0}^{\infty }\!\!\!\int_{0}^{\infty }l_{F}\left(
x,y,z,r,s,t\right) l_{f_{2}}\left\vert u\left( r,s,t\right) \right\vert
e^{\tau \left( r+s+t\right) }\cdot e^{-\tau \left( r+s+t\right) }drdsdt \\
&\leq \alpha \exp \left( \tau \left( x+y+z\right) \right) +\left\Vert
u\right\Vert _{\tau }\int_{0}^{x}\!\!\!\int_{0}^{y}\!\!\!\int_{0}^{z}l_{K}\left(
x,y,z,r,s,t\right) l_{f_{1}}e^{\tau \left( r+s+t\right) }drdsdt \\
&+\left\Vert u\right\Vert _{\tau }\int_{0}^{\infty }\!\!\!\int_{0}^{\infty
}\!\!\!\int_{0}^{\infty }l_{F}\left( x,y,z,r,s,t\right)l_{f_{2}} e^{\tau \left(
r+s+t\right) }drdsdt
\end{align*}%
We have%
\begin{equation*}
\left\vert A\left( u\right) \left( x,y,z\right) \right\vert \leq \left[
\alpha +\left\Vert u\right\Vert _{\tau }\left( l_{1}+l_{2}\right) \exp \left( \tau \left( x+y+z\right)
\right) \right] ,
\end{equation*}%
hence $A(u)\in X_{\tau }.$

The operator $A$ is a contraction in $X_{\tau }$ with respect to $\left\Vert
\cdot \right\Vert _{\tau }.$ Indeed, for $u,v\in X_{\tau },$  we have:%
\begin{align*}
&\left\vert A\left( u\right) \left( x,y,z\right) -A\left( v\right) \left(
x,y,z\right) \right\vert  \leq \left\vert g\left( x,y,z,h\left( u\right)
\left( x,y,z\right) \right) -g\left( x,y,z,h\left( v\right) \left(
x,y,z\right) \right) \right\vert   \notag \\
&+\int_{0}^{x\!\!\!}\int_{0}^{y}\!\!\!\int_{0}^{z}\left\vert K\left(
x,y,z,r,s,t,f_{1}\left( u\right) \left( r,s,t\right) \right) -K\left(
x,y,z,r,s,t,f_{1}\left( v\right) \left( r,s,t\right) \right) \right\vert
drdsdt  \notag \\
&+\int_{0}^{\infty }\!\!\!\int_{0}^{\infty }\!\!\!\int_{0}^{\infty }\left\vert F\left(
x,y,z,r,s,t,f_{2}\left( u\right) \left( r,s,t\right) \right) -F\left(
x,y,z,r,s,t,f_{2}\left( v\right) \left( r,s,t\right) \right) \right\vert
drdsdt  \notag \\
&\leq l_{g}\left\vert h\left( u\right) \left( x,y,z\right) -h\left( v\right)
\left( x,y,z\right) \right\vert \notag \\
&+\int_{0}^{x}\int_{0}^{y}\int_{0}^{z}l_{K}\left( x,y,z,r,s,t\right)
l_{f_{1}}\left\vert f_{1}\left( u\right) \left( r,s,t\right) -f_{1}\left(
v\right) \left( r,s,t\right) \right\vert drdsdt \notag\\
&+\int_{0}^{\infty
}\int_{0}^{\infty }\int_{0}^{\infty }l_{F}\left( x,y,z,r,s,t\right)
l_{f_{2}}\left\vert f_{2}\left( u\right) \left( r,s,t\right) -f_{2}\left(
v\right) \left( r,s,t\right) \right\vert drdsdt \notag \\
&\leq l_{g}l_{h}\left\Vert u-v\right\Vert_{\tau } e^{\tau \left( x+y+z\right)
}+\int_{0}^{x}\!\!\!\int_{0}^{y}\!\!\!\int_{0}^{z}l_{K}\left( x,y,z,r,s,t\right)
l_{f_{1}}\left\Vert u-v\right\Vert_{\tau } e^{\tau \left( r+s+t\right) }drdsdt
\notag \\
&+\int_{0}^{\infty }\!\!\!\int_{0}^{\infty }\!\!\!\int_{0}^{\infty }l_{F}\left(
x,y,z,r,s,t\right) l_{f_{2}}\left\Vert u-v\right\Vert_{\tau } e^{\tau \left(
r+s+t\right) }drdsdt \\
&\leq l_{g}l_{h}\left\Vert u-v\right\Vert_{\tau } e^{\tau \left( x+y+z\right)
}+l_{1}\left\Vert u-v\right\Vert_{\tau } e^{\tau \left( x+y+z\right)
}+l_{2}\left\Vert u-v\right\Vert_{\tau } e^{\tau \left( x+y+z\right) }.
\end{align*}%
Then we have:%
\begin{equation*}
\left\Vert A\left( u\right) -A\left( v\right) \right\Vert_{\tau } \leq \left(
l_{g}l_{h}+l_{1}+l_{2}\right) \left\Vert u-v\right\Vert_{\tau }
\end{equation*}%
for all $u,v\in X_{\tau }.$ From $\left( C8\right) $ we have that $A$ is a
contraction. Hence $A$ is a $c-$Picard operator, with%
\begin{equation*}
c=\frac{1}{1-l_{g}l_{h}-l_{1}-l_{2}}.
\end{equation*}%
Hence the equation \eqref{eq1} has a unique solution in $X_{\tau }.$
\end{proof}

\section{Hyers-Ulam-Rassias stability}

In what follows we consider the equation%
\begin{align}\label{eq10}
&u\left( x,y,z\right) =g\left( x,y,z,h\left( u\right) \left( x,y,z\right)\right) \\
&+\int_{0}^{x}\!\!\! \int_{0}^{y}\!\!\! \int_{0}^{z}K\left( x,y,z,r,s,t,f_{1}\left(
u\right) \left( r,s,t\right) \right) drdsdt+ \int_{0}^{\infty
}\!\!\! \int_{0}^{\infty }\!\!\! \int_{0}^{\infty }F\left( x,y,z,r,s,t,f_{2}\left(
u\right) \left( r,s,t\right) \right) drdsdt \nonumber
\end{align}
and the inequality%
\begin{align}\label{eq20}
&\left\vert u\left( x,y,z\right) -g\left( x,y,z,h\left( u\right) \left(
x,y,z\right) \right) -\int_{0}^{x}\!\!\!\int_{0}^{y}\!\!\!\int_{0}^{z}K\left(
x,y,z,r,s,t,f_{1}\left( u\right) \left( r,s,t\right) \right)
drdsdt-\right. \nonumber\\
&-\left.\int_{0}^{\infty }\!\!\!\int_{0}^{\infty }\!\!\!\int_{0}^{\infty }F\left(
x,y,z,r,s,t,f_{2}\left( u\right) \left( r,s,t\right) \right)
drdsdt\right\vert \leq \varphi \left( x,y,z\right) ,
\end{align}
where $\left( E,\left\vert \cdot \right\vert \right) $ is a Banach space and
$\varphi \in C\left( \left[ 0,a\right) ^{3}, \mathbb{R}_{+}\right) $ is increasing$,g\in
C\left( \left[ 0,a\right) ^{3}\times E,E\right) ,K\in C\left( \left[
0,a\right) ^{6}\times E,E\right) ,F\in C\left( \left[ 0,a\right) ^{6}\times
E,E\right) ,h\in C\left( X_{\tau },X_{\tau }\right) ,f_{1}\in C\left(
X_{\tau },X_{\tau }\right) ,f_{2}\in C\left( X_{\tau },X_{\tau }\right) .$

\begin{theorem}
Under the conditions $\left( C1\right) -\left( C10\right) $  and
\begin{itemize}
\item[(i)]
there exists $N>0$ such that%
\begin{equation*}
\left\vert h\left( u\right) \left( x,y,z\right) -h\left( v\right) \left(
x,y,z\right) \right\vert \leq N\left\vert u\left( x,y,z\right) -v\left(
x,y,z\right) \right\vert ,\forall x,y,z\in \left[ 0,a\right) ,\forall u,v\in
X_{\tau };
\end{equation*}

\item[(ii)] $l_{g}N<1$,
\end{itemize}

if $u$ is a solution of \eqref{eq20} and $u^{\ast }$ is the unique
solution of \eqref{eq10}, we have%
\begin{equation*}
\left\vert u\left( x,y,z\right) -u^{\ast }\left( x,y,z\right) \right\vert
\leq C_{KFghf_{1}f_{2}}\varphi \left( x,y,z\right)
\end{equation*}%
where%
\begin{equation*}
C_{KFghf_{1}f_{2}}\varphi \left( x,y,z\right) =\frac{1}{1-l_{g}N}\exp \left(
\frac{m}{1-l_{g}N}\right) ,
\end{equation*}%
i.e the equation \eqref{eq10} is Hyers-Ulam-Rassias stable.
\end{theorem}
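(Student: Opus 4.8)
The plan is to reduce the statement to a Gronwall-type integral inequality for the function $v:=\left\vert u-u^{\ast }\right\vert $ and then to invoke the abstract Gronwall lemma for Picard operators. Let $A$ be the operator whose fixed points are the solutions of \eqref{eq10} — the analogue for \eqref{eq10} of the operator $A$ in the proof of the existence theorem — so that $u^{\ast }=A\left( u^{\ast }\right) $, while \eqref{eq20} says $\left\vert u\left( x,y,z\right) -A\left( u\right) \left( x,y,z\right) \right\vert \leq \varphi \left( x,y,z\right) $. Combining these with the triangle inequality,
\begin{equation*}
\left\vert u\left( x,y,z\right) -u^{\ast }\left( x,y,z\right) \right\vert \leq \varphi \left( x,y,z\right) +\left\vert A\left( u\right) \left( x,y,z\right) -A\left( u^{\ast }\right) \left( x,y,z\right) \right\vert .
\end{equation*}

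The second term is estimated exactly as in the contraction step of the existence theorem, with one change: the $g$-part is treated by (C3) together with hypothesis (i), which gives $l_{g}N\left\vert u\left( x,y,z\right) -u^{\ast }\left( x,y,z\right) \right\vert $ instead of the Bielecki-type bound coming from (C2). The Volterra term is bounded by (C4) and (C6) by $\int_{0}^{x}\!\int_{0}^{y}\!\int_{0}^{z}l_{K}l_{f_{1}}\left\vert u-u^{\ast }\right\vert \,drdsdt$ and the Hammerstein term by (C5) and (C6) by $\int_{0}^{\infty }\!\int_{0}^{\infty }\!\int_{0}^{\infty }l_{F}l_{f_{2}}\left\vert u-u^{\ast }\right\vert \,drdsdt$. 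Since $1-l_{g}N>0$ by (ii), moving $l_{g}N\left\vert u-u^{\ast }\right\vert $ to the left-hand side, dividing by $1-l_{g}N$, and setting $\lambda :=\left( 1-l_{g}N\right) ^{-1}$, $v:=\left\vert u-u^{\ast }\right\vert $, one obtains
\begin{equation*}
v\left( x,y,z\right) \leq \lambda \varphi \left( x,y,z\right) +\lambda \int_{0}^{x}\!\!\int_{0}^{y}\!\!\int_{0}^{z}l_{K}l_{f_{1}}v\,drdsdt+\lambda \int_{0}^{\infty }\!\!\int_{0}^{\infty }\!\!\int_{0}^{\infty }l_{F}l_{f_{2}}v\,drdsdt .
\end{equation*}

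Now I would introduce the operator $B$ on $X_{\tau }$ defined by the right-hand side of this inequality with $v$ replaced by its argument. By (C7), exactly as in the existence theorem, the integral part of $B$ is $\left\Vert \cdot \right\Vert _{\tau }$-Lipschitz, so $B$ is a Picard operator on $X_{\tau }$; it is increasing because $l_{K},l_{F},l_{f_{1}},l_{f_{2}}\geq 0$; and the last display says $v\leq B\left( v\right) $. The abstract Gronwall lemma for Picard operators then gives $v\leq w^{\ast }$, where $w^{\ast }$ is the unique fixed point of $B$. To identify $w^{\ast }$, I would iterate $w^{\ast }=B\left( w^{\ast }\right) $, using the monotonicity of $\varphi $ (so that $\varphi \left( r,s,t\right) \leq \varphi \left( x,y,z\right) $ under the triple integral) together with the bound $m$ from (C10) on the cumulative kernel $l_{K}l_{f_{1}}+l_{F}l_{f_{2}}$; summing the resulting series one is led to $w^{\ast }\left( x,y,z\right) \leq \lambda \varphi \left( x,y,z\right) \exp \left( \lambda m\right) $, that is,
\begin{equation*}
\left\vert u\left( x,y,z\right) -u^{\ast }\left( x,y,z\right) \right\vert \leq \frac{1}{1-l_{g}N}\exp \left( \frac{m}{1-l_{g}N}\right) \varphi \left( x,y,z\right) ,
\end{equation*}
which is the claimed Hyers-Ulam-Rassias stability.

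The reduction in the first two steps is a direct repetition of the contraction estimate of the existence theorem, with (C2) replaced by (i) and (ii) used to absorb the $g$-term. The main obstacle is the last step — setting up and justifying the Gronwall estimate in this mixed Volterra-Hammerstein, three-variable, Banach-space-valued setting so that the constant comes out exactly as $\frac{1}{1-l_{g}N}\exp \left( \frac{m}{1-l_{g}N}\right) $; in particular one must handle the nonlocal (Hammerstein) integral alongside the Volterra one and check that the series generated by the iteration sums to an exponential.
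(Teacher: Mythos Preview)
Your reduction to the integral inequality for $v=|u-u^{\ast}|$ is correct and coincides line for line with the paper's proof: the triangle inequality against $A(u^{\ast})=u^{\ast}$, the use of (C3)--(C6) together with (i) to produce $l_{g}N\,v(x,y,z)$ from the $g$-term, and the absorption via (ii) all match. The paper then enlarges the Volterra integral to $[0,\infty)^{3}$ so that both integrals combine into a single one with kernel $l_{K}l_{f_{1}}+l_{F}l_{f_{2}}$, and finishes in one stroke by citing a Wendorff-type lemma for unbounded domain (Filatov--Sharova); together with (C10) this yields directly
\[
v(x,y,z)\leq \frac{\varphi(x,y,z)}{1-l_{g}N}\exp\!\left(\frac{m}{1-l_{g}N}\right).
\]
No Picard-operator machinery is used at this final stage.

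Your alternative route through the abstract Gronwall lemma is thematically aligned with the paper but has two gaps you would need to close. First, (C7) only gives the integral part of $B$ a $\|\cdot\|_{\tau}$-Lipschitz constant $\lambda(l_{1}+l_{2})$; for $B$ to be a contraction (hence Picard) you need $l_{g}N+l_{1}+l_{2}<1$, and this does not follow from (C8), which involves $l_{h}$ rather than $N$. Second, the step ``$\varphi(r,s,t)\leq\varphi(x,y,z)$ under the triple integral'' is available for the Volterra piece but not for the Hammerstein integral, whose domain is all of $[0,\infty)^{3}$; so the iteration as you describe it will not sum to the stated exponential without an additional argument for the nonlocal term. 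The paper sidesteps both issues by outsourcing the entire inequality to the cited Wendorff lemma.
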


\begin{proof}
We have%
\begin{align*}
&\left\vert u\left( x,y,z\right) -u^{\ast }\left( x,y,z\right) \right\vert \\
&\leq \left\vert u\left( x,y,z\right) -g\left( x,y,z,h\left( u\right)
\left( x,y,z\right) \right) -\int_{0}^{x}\!\!\!\int_{0}^{y}\!\!\!\int_{0}^{z}K\left(
x,y,z,r,s,t,f_{1}\left( u\right) \left( r,s,t\right) \right)
drdsdt\right.\\
&\left.-\int_{0}^{\infty }\!\!\!\int_{0}^{\infty }\!\!\!\int_{0}^{\infty }F\left(
x,y,z,r,s,t,f_{2}\left( u\right) \left( r,s,t\right) \right)
drdsdt\right\vert  \\
&+\left\vert g\left( x,y,z,h\left( u\right) \left( x,y,z\right) \right)
-g\left( x,y,z,h\left( u^{\ast }\right) \left( x,y,z\right) \right)
\right\vert  \\
&+\int_{0}^{x}\!\!\!\int_{0}^{y}\!\!\!\int_{0}^{z}\left\vert K\left(
x,y,z,r,s,t,f_{1}\left( u\right) \left( r,s,t\right) \right) -K\left(
x,y,z,r,s,t,f_{1}\left( u^{\ast }\right) \left( r,s,t\right) \right)
\right\vert drdsdt \\
&+\int_{0}^{\infty }\!\!\!\int_{0}^{\infty }\!\!\!\int_{0}^{\infty }\left\vert F\left(
x,y,z,r,s,t,f_{2}\left( u\right) \left( r,s,t\right) \right) -F\left(
x,y,z,r,s,t,f_{2}\left( u^{\ast }\right) \left( r,s,t\right) \right)
\right\vert drdsdt \\
&\leq \varphi \left( x,y,z\right) +l_{g}\left\vert h\left( u\right) \left(
x,y,z\right) -h\left( u^{\ast }\right) \left( x,y,z\right) \right\vert  \\
&+\int_{0}^{x}\!\!\!\int_{0}^{y}\!\!\!\int_{0}^{z}l_{K}\left( x,y,z,r,s,t\right)
l_{f_{1}}\left\vert f_{1}\left( u\right) \left( r,s,t\right) -f_{1}\left(
u^{\ast }\right) \left( r,s,t\right) \right\vert drdsdt \\
&+\int_{0}^{\infty}\!\!\!\int_{0}^{\infty}\!\!\!\int_{0}^{\infty}l_{F}\left(
x,y,z,r,s,t\right) l_{f_{2}}\left\vert f_{2}\left( u\right) \left(
r,s,t\right) -f_{2}\left( u^{\ast }\right) \left( r,s,t\right) \right\vert
drdsdt.
\end{align*}%
From  conditions $\left( i\right) ,\left( ii\right) $ we have:%
\begin{align*}
&\left\vert u\left( x,y,z\right) -u^{\ast }\left( x,y,z\right) \right\vert
\leq \varphi \left( x,y,z\right) +l_{g}N\left\vert u\left( x,y,z\right)
-u^{\ast }\left( x,y,z\right) \right\vert  \\
&+\int_{0}^{x}\!\!\!\int_{0}^{y}\!\!\!\int_{0}^{z}l_{K}\left( x,y,z,r,s,t\right)
l_{f_{1}}\left\vert u\left( r,s,t\right) -u^{\ast }\left( r,s,t\right)
\right\vert drdsdt \\
&+\int_{0}^{\infty}\!\!\!\int_{0}^{\infty}\!\!\!\int_{0}^{\infty}l_{F}\left(
x,y,z,r,s,t\right) l_{f_{2}}\left\vert u\left( r,s,t\right) -u^{\ast }\left(
r,s,t\right) \right\vert drdsdt \\
&\leq \varphi \left( x,y,z\right) +l_{g}N\left\vert u\left( x,y,z\right)
-u^{\ast }\left( x,y,z\right) \right\vert \\
&+\int_{0}^{\infty}\!\!\!\int_{0}^{\infty}\!\!\!\int_{0}^{\infty}\left(l_{K}\left(
x,y,z,r,s,t\right) l_{f_{1}}+l_{F}\left( x,y,z,r,s,t\right) l_{f_{2}}\right)
\left\vert u\left( r,s,t\right) -u^{\ast }\left( r,s,t\right) \right\vert
drdsdt.
\end{align*}%
Then%
\begin{align*}
&\left( 1-l_{g}N\right) \left\vert u\left( x,y,z\right) -u^{\ast }\left(
x,y,z\right) \right\vert \\
&\leq \varphi \left( x,y,z\right) +\int_{0}^{\infty
}\!\!\!\int_{0}^{\infty }\!\!\!\int_{0}^{\infty }\left( l_{K}\left(
x,y,z,r,s,t\right) l_{f_{1}}+l_{F}\left( x,y,z,r,s,t\right) l_{f_{2}}\right)
\left\vert u\left( r,s,t\right) -u^{\ast }\left( r,s,t\right) \right\vert
drdsdt
\end{align*}%
and we have%
\begin{align*}
&\left\vert u\left( x,y,z\right) -u^{\ast }\left( x,y,z\right) \right\vert
\leq \frac{\varphi \left( x,y,z\right) }{1-l_{g}N}\\
&+\frac{1}{1-l_{g}N}%
\int_{0}^{\infty }\!\!\!\int_{0}^{\infty }\!\!\!\int_{0}^{\infty }\left(
l_{K}\left( x,y,z,r,s,t\right) l_{f_{1}}+l_{F}\left(
x,y,z,r,s,t\right) l_{f_{2}}\right) \left\vert u\left( r,s,t\right) -u^{\ast
}\left( r,s,t\right) \right\vert drdsdt.
\end{align*}%
From Wendorf Lemma \cite{FilatovSha} for unbounded domain it follows that%
\begin{align*}
&\left\vert u\left( x,y,z\right) -u^{\ast }\left( x,y,z\right) \right\vert\\
&\leq \frac{\varphi \left( x,y,z\right) }{1-l_{g}N}\exp \left[ \frac{1}{%
1-l_{g}N}\int_{0}^{\infty }\!\!\!\int_{0}^{\infty }\!\!\!\int_{0}^{\infty }\left(
l_{K}\left( x,y,z,r,s,t\right) l_{f_{1}}+l_{F}\left(
x,y,z,r,s,t\right) l_{f_{2}}\right) drdsdt\right]
\end{align*}%
and we have%
\begin{equation*}
\left\vert u\left( x,y,z\right) -u^{\ast }\left( x,y,z\right) \right\vert
\leq \frac{1}{1-l_{g}N}\exp \left[ \frac{m}{1-l_{g}N}\right] \cdot \varphi
\left( x,y,z\right)
\end{equation*}%
and%
\begin{equation*}
\left\vert u\left( x,y,z\right) -u^{\ast }\left( x,y,z\right) \right\vert
\leq C_{KFghf_{1}f_{2}}\cdot \varphi \left( x,y,z\right)
\end{equation*}%
where%
\begin{equation*}
C_{KFghf_{1}f_{2}}\varphi \left( x,y,z\right) =\frac{1}{1-l_{g}N}\exp \left(
\frac{m}{1-l_{g}N}\right) ,
\end{equation*}%
and the equation \eqref{eq10} is Hyers-Ulam-Rassias stable.
\end{proof}

\bibliographystyle{elsarticle-num}

\begin{thebibliography}{10}
\expandafter\ifx\csname url\endcsname\relax
  \def\url#1{\texttt{#1}}\fi
\expandafter\ifx\csname urlprefix\endcsname\relax\def\urlprefix{URL }\fi
\expandafter\ifx\csname href\endcsname\relax
  \def\href#1#2{#2} \def\path#1{#1}\fi
  
  \bibitem{Abdollahpour} M. R. Abdollahpour, R. Aghayaria, and M. Th. Rassias, Hyers-Ulam stability of associated Laguerre differential equations in a subclass of analytic functions, Journal of Mathematical Analysis and Applications, 437(2016), 605- 612.

\bibitem{AlsinaGer} C. Alsina and R. Ger, On some inequalities and stability results related to exponential function,
 	J. Inequal. Appl.,  2 (1998), 373-380.
 	

\bibitem{BrzCadCipl} Brzdek J., C\u{a}dariu L., Ciepli\'{n}ski K., Fixed Point Theory and the Ulam Stability, Hindawi Publishing Corporation, J. Funct. Spaces, 2014, Article ID 829419, http://dx.doi.org/10.1155/2014/829419

\bibitem{BrzdekPopaRasaXu}
 J. Brzdek, D. Popa, I. Rasa, B. Xu, Ulam Stability of Operators, Elsevier, 2018.

\bibitem{Brzdek} J. Brzdek, D. Popa, and Th. M. Rassias, Ulam Type Stability, Springer, 2019.


\bibitem{CimpeanPopa} D. S. Cimpean and D. Popa,
	 On the stability of the linear differential equation of higher order with constant coefficients,
	Appl. Math. Comput.,  217 (2010),  4141-4146.
	
 \bibitem{FilatovSha}
A.N. Filatov, L.V. Sharova, Integral inequalities and theory of nonlinear oscillations, Moscow, Nauka, 1976.

 \bibitem{Hyers} D. H. Hyers, On the stability of the linear functional equation,
	 Proc. Natl. Acad. Sci. USA, 27 (1941), 222-224.


\bibitem{Jung} S.-M. Jung, Hyers-Ulam-Rassias Stability of Functional Equations in Nonlinear Analysis, Springer, 2011.

\bibitem{Jung2} S. -M. Jung, C. Mortici, and M. Th. Rassias, On a functional equation of trigonometric type, Applied Mathematics and Computation, 252(2015), 294-303.

    \bibitem{Jung3} S. -M. Jung and M. Th. Rassias, A linear functional equation of third order associated to the Fibonacci numbers, Abstract and Applied Analysis, Volume 2014 (2014), Article ID 137468

\bibitem{Jung4} S. -M. Jung, D. Popa, and M. Th. Rassias, On the stability of the linear functional equation in a single variable on complete metric groups, Journal of Global Optimization, 59(2014), 165–171.

\bibitem{Jung1} S.-M. Jung,
	 Hyers-{Ulam} stability of linear differential equations of first order, {III},
	 J. Math. Anal. Appl., 311 (2005), 139-146.
	
\bibitem{Jung2}
S.-M. Jung, Hyers-{Ulam} stability of linear partial differential equations of first order,
 Appl. Math. Lett.  22 (2009) 70-74.

\bibitem{JungLee}
S.-M. Jung, K.-S. Lee, Hyers-{Ulam} stability of first order linear partial differential equations with constant coefficients,
 Math. Inequal. Appl. 10 (2) (2007) 261-266.

\bibitem{JungRassias1}
S.-M. Jung, Th.M. Rassias, Ulam's problem for approximate homomorphisms in connection with Bernoulli's differential equation,
Appl. Math. Comput.  187 (1) (2007) 223 - 227.

\bibitem{JungRassias2} S.-M. Jung and Th.M. Rassias, Generalized Hyers-Ulam stability of Riccati differential equation,
Math. Inequal. Appl. 11 (4)(2008) 777-782.

\bibitem{JungRassias3}
S.-M. Jung, B. Kim and Th.M. Rassias, On the Hyers-Ulam stability of a system of Euler differential equations of first order,
Tamsui Oxf. J. Math. Sci.   24 (4) (2008) 381-388.

\bibitem{Kannappan} Pl. Kannappan, Functional Equations and Inequalities with Applications, Springer, 2009,


\bibitem{Khan1} Khan H., Khan A., Abdeljawad T.,  Alkhazzan A.,  Existence results in Banach space for a nonlinear impulsive system, Adv. Differ. Equ., 2019, 18, https://doi.org/10.1186/s13662-019-1965-z

\bibitem{Khan2} Khan A.,  Gómez-Aguilar J.F., Khan T.S., Khan H.  Stability analysis and numerical solutions of fractional order HIV/AIDS model., Chaos, Solitons Fractals, 2019, 122, 119-128

\bibitem{Khan3} Khan H.,  Abdeljawad T.,  Aslam M., Khan R.A., Khan A.,  Existence of positive solution and Hyers-Ulam stability for a nonlinear singular-delay-fractional differential equation, Adv. Differ. Equ., 2019, 104, https://doi.org/10.1186/s13662-019-2054-z

\bibitem{Khan4} Khan H., Gómez-Aguilar J.F., Khan A., Khan T.S., Stability analysis for fractional order adecvation reaction diffusion system, Physica A: Statistical Mechanics and its Applications, 2019, 521, 737-751

\bibitem{Lee} Y. -H. Lee, S. -M. Jung, and M. Th. Rassias, Uniqueness theorems on functional inequalities concerning cubic-quadratic-additive equation, Journal of Mathematical Inequalities, 12(1)(2018), 43–61.

\bibitem{Lee2} Y. -H. Lee, S. -M. Jung, and M. Th. Rassias, On an n-dimensional mixed type additive and quadratic functional equation, Applied Mathematics and Computation, 228(2014), 13–16.

  \bibitem{LunguCiplea}
 N. Lungu, S.A. Ciplea, Ulam-{Hyers}-{Rassias} stability of pseudoparabolic partial differential equations,
Carpatian J. Math. 31 (2) (2015) 233-240.


\bibitem{LC}
 N. Lungu,  C. Craciun,  Hyers-Ulam-Rassias stability of a hyperbolic partial differential equations,
  International Scholarly Research Network Mathematical Analysis,  2012, Article ID 609754, 10 pages
http://dx.doi.org/10.5402/2012/609754.

\bibitem{LunguMar}
N. Lungu, D. Marian, Hyers-Ulam-Rassias stability of some quasilinear partial differential equations of first order,
 Carpatian J. Math. 35 (2) (2019) 165-170.


\bibitem{LunguPopa1}
N. Lungu, D. Popa, Hyers-{Ulam} stability of a first order partial differential equation,
 J. Math. Anal. Appl.  385 (2012) 86-91.

  \bibitem{LunguPopa2}
N. Lungu, D. Popa, On the Hyers-{Ulam} stability of a first order partial differential equation,
 Carpatian J. Math. 28 (2012) 77-82.

\bibitem{LunguPopa3}
N. Lungu, D. Popa, Hyers-{Ulam} stability of some partial differential equation,
 Carpatian J. Math. 30 (2014) 327-334.


 \bibitem{MarianCipleaLungu}
 D. Marian, S.A. Ciplea, N. Lungu, Ulam-Hyers stability of a parabolic partial differential equation, Demonstr. Math. (to appear)

\bibitem{Milovanovic} G. V. Milovanovi\'c and M. Th. Rassias (eds.), Analytic Number Theory, Approximation Theory and Special Functions, Springer, 2014.
    
    \bibitem{Mortici} C. Mortici, S. -M. Jung, and M. Th. Rassias, On the stability of a functional equation associated with the Fibonacci numbers, Abstract and Applied Analysis, Volume 2014 (2014), Article ID 546046.

  \bibitem{NgocThLo}
L.T.P. Ngoc, T.M. Thuyet, N.T. Long,  On nonlinear Volterra-Hammerstein integral equation in three variables,
Nonlinear Functional Analysis and Applications  19 (2) (2014) 193-211.

 \bibitem{Obloza} M. Obloza,  Hyers stability of the linear differential equation,
	 Rocznik Nauk-Dydakt. Prace Mat.,  13 (1993), 259-270.

\bibitem{Park} C. Park and M. Th. Rassias, Additive functional equations and partial multipliers in  C*-algebras, Revista de la Real Academia de Ciencias Exactas, Serie A. Matemáticas, 113(3)(2019), 2261-2275.

 \bibitem{PopaPugna} D. Popa and G. Pugna,
	 Hyers-{Ulam} stability of {Euler}'s differential equation,
 	 Results Math., 69 (2016), 317-325.

\bibitem{PopaRasa1} D. Popa and I. Rasa,
	 On the {Hyers}-{Ulam} stability of the linear differential equation,
	 J. Math. Anal. Appl., 381 (2011), 530-537.

\bibitem{PopaRasa2} D. Popa and I. Rasa,
	 Hyers-{Ulam} stability of the linear differential operator with non-constant coefficients,
	 Appl. Math. Comput., 219 (2012), 1562-1568.

  \bibitem{PraRassias}
  A. Prastaro, Th. M. Rassias,  Ulam stability in geometry of PDE's,
 Nonlinear Functional Analysis and Applications  8 (2)(2003) 259 - 278.

  \bibitem{RezRassias}
 H. Rezaei, S.-M. Jung and Th. M. Rassias, Laplace transform and Hyers-Ulam stability of linear differential equations,
 J. Math. Anal. Appl., 403 (1) (2013), 244-251.

\bibitem{RL} I.A. Rus,  N. Lungu,  Ulam stability of a nonlinear hyperbolic partial differential equations, Carpatian J. Math. 24 (2008) 403-408.

\bibitem{Rus2} I.A. Rus,  Remarks on Ulam stability of the operatorial equations, Fixed Point Theory 10 (2009) 305-320.

\bibitem{Takahasi} S. E. Takahasi, H. Takagi, T. Miura and S. Miyajima,
	 The {Hyers}-{Ulam} stability constant of first order linear differential operators,
	 J. Math. Anal. Appl.,  296 (2004), 403-409.
	
\bibitem{Ulam} S. M. Ulam, A Collection of Mathematical Problems. Interscience, New York, 1960.
\end{thebibliography}

\end{document}